\documentclass[12pt]{amsart}
\usepackage{amsmath,amssymb,amsfonts,amsthm,amscd,indentfirst}
\usepackage{amsmath,amssymb,amsfonts,amsthm,amscd}
\usepackage{indentfirst}
\usepackage{hyperref}
\usepackage{graphicx}

\numberwithin{equation}{section}
\usepackage{xcolor}

\makeatletter
\@namedef{subjclassname@2020}{  \textup{2020} Mathematics Subject Classification}
\makeatother

\newtheorem{prop}{Proposition}[section]
\newtheorem{theo}[prop]{Theorem}
\newtheorem{lemm}[prop]{Lemma}

\newtheorem{rema}[prop]{Remark}

\def\<{\langle}
\def\>{\rangle}

\newcommand{\rr}{\mathbb{R}}

\begin{document}
\title{On Liouville's theorem for the Hessian quotient equation $\sigma_2/\sigma_1$}
\author{Siyuan Lu and Marcin Sroka}
\address{Department of Mathematics and Statistics, McMaster University, 1280 Main Street
West, Hamilton, ON, L8S 4K1, Canada}
\email{siyuan.lu@mcmaster.ca}
\address{Faculty of mathematics and computer science, Jagiellonian University, \L ojasiewicza 6, 30-348, Krak\'ow, Poland}
\email{marcin.sroka@uj.edu.pl}
\thanks{Research of the first author was supported in part by NSERC Discovery Grant. Research of the second author was supported in part by National Science Center of Poland grant no. 2025/57/B/ST1/00367.}

\begin{abstract} 
We prove Liouville's theorem for semi-convex entire solutions to Hessian quotient equation $\sigma_2/\sigma_1=1$ in $\rr^n$. The proof is based on the observation that after rewriting the quotient operator as the $\sigma_2$ operator, acting on a new function, one can refer to the recent result of Shankar and Yuan \cite{SY22} on Liouville's theorem for $\sigma_2$ equation. 
\end{abstract}
\keywords {Hessian quotient equation, Liouville's theorem, interior a priori estimates}
\subjclass[2020]{35B53, 35J60, 35J15}

\maketitle

\section{Introduction}

Liouville's type theorem for Hessian equations
\begin{eqnarray}\label{generalhessian}
F(D^2u)=1,
\end{eqnarray} 
is a classic theme of research. In this paper we obtain such a theorem, alongside an interior estimate, for equation \eqref{generalhessian} when $F=\sigma_2/\sigma_1$. 

Recently there has been a substantial advance regarding regularity for (positive) Hessian quotient equations $\sigma_n/\sigma_k$, cf. \cite{L25,LT25}. It was sparked by the special concavity property for those operators discovered by the second named author and Guan \cite{GS25}. Motivated by this advances, we tried to obtain a similar concavity result for general operators $\sigma_k/\sigma_l$ when $1 \leq l < k < n$. As a side result, we observe that for the very particular case of $\sigma_2/\sigma_1$, one can substitute this sort of inequality by rewriting the resulting equation as $\sigma_2$ equation. Even though this rewriting, Lemma \ref{quotienttohessian} below, is surely known to experts, we are not aware of any of the presented consequences to be known. Especially, they all depend on the results for $\sigma_2$ equation proved only recently by Shankar and Yuan \cite{SY20,SY22,SY25}. 

In the context of quotient operators, till now, Liouville's theorem is known only for two cases: $\sigma_n/\sigma_k$ for $k=n-1,n-2$; and $\sigma_3/\sigma_1$ in dimensions $3$ and $4$. It is because $\sigma_n/\sigma_{n-1}$ is dual to $\Delta u$ and Liouville's theorem follows from classic result; $\sigma_n/\sigma_{n-2}$ is dual to $\sigma_2$ and Liouville's theorem follows from the result of Chang and Yuan \cite{CY}; $\sigma_3/\sigma_1$ in dimensions $3$ and $4$ can be written as special Lagrangian equation and Liouville's theorem follows from the result of Yuan \cite{Y}.

The other known case as far as we are aware, is the positive quotient operators $\sigma_n/\sigma_k$ for $1 \leq k <n$, however only under quadratic growth condition, cf. \cite{BCBJ03}. 

Our result below, Theorem \ref{Liouville21}, is for the other extreme and seems to be the first one, apart from the special Lagrangian one, in the case of operators $\sigma_k/\sigma_l$ when $k<n$. Importantly we do not put any extra assumption on the solution except, as is seen by Warren's example \cite{W16}, necessary one. As a simple consequence, Theorem \ref{Liouvillen-2n-1}, we obtain such a theorem also for strictly convex solutions to \eqref{generalhessian} when $F=\sigma_{n-1}/\sigma_{n-2}$.

\medskip

Interior estimate for \eqref{generalhessian} is another classic theme of research. By now the situation is clear for positive quotient operators by \cite{L25}. For $\sigma_k/\sigma_l$ when $k<n$, the only known case is $\sigma_3/\sigma_1$ in dimensions $3$ and $4$ by Chen, Warren and Yuan \cite{CWY} and Wang and Yuan \cite{WdY14}, again using the 
special Lagrangian structure of the equation. See also partial result \cite{LT26}. Our Theorem \ref{interiorestthm} for $\sigma_2/\sigma_1$ seems to be the first one, apart from the special Lagrangian one, in this direction. It was proved directly in \cite{L25} when $n=2$ and was noticed in \cite{S25}, again only when $n=2$, to follow from famous interior estimate of Heinz \cite{H} for the Monge-Amp\`ere equation in dimension two.        

\medskip

Through the paper we use the standard notation related to the Hessian equations which we briefly summarize. Let $\sigma_k$ for $1 \leq k \leq n$ denote the $k$-Hessian operator
\begin{eqnarray*}
\sigma_k(\lambda)=\sum_{1\leq i_1 < \cdots< i_k \leq n} \lambda_{i_1} \cdots\lambda_{i_k},
\end{eqnarray*}
for $\lambda \in \rr^n$. 

We denote by $\Gamma_k$ G\r{a}rding's $k$-cone
\begin{eqnarray*}
\Gamma_k=\{\lambda \in \rr^n \: |  \sigma_l(\lambda)>0,\forall 1\leq l \leq k\}.
\end{eqnarray*}

We say a function $u$ is an admissible solution for the $\sigma_k$ for $1 \leq k \leq n$ or $\sigma_k/\sigma_l$ for $1\leq l <k\leq n $ if $\lambda(D^2u)$ - the vector of eigenvalues of Hessian of $u$ - belongs to $\Gamma_k$. We understand that the notions of the operators $\sigma_k$, $\sigma_k/\sigma_l$ and the cones $\Gamma_k$ are extended to symmetric matrices through their eigenvalues as is standard to do. We say a function $u$ is semi-convex provided there exists $K>0$ such that $D^2u>-KI$, where $I$ is an identity matrix; convex when $D^2u \geq 0$; and strictly convex if, in a domain of consideration, $D^2u>0$.

\medskip

The organization of the paper is as follows. As we acknowledged above, the argument we present is based on rewriting the quotient operator as the Hessian operator which we present in Lemma \ref{quotienttohessian} below. Then, we present announced Liouville's theorems in Section 2. Interior estimate for the operator $\sigma_2/\sigma_1$ is derived in Section 3.

\begin{lemm}\label{quotienttohessian}
For $\lambda \in \Gamma_2$, the following equality holds
\begin{eqnarray*}
\sigma_2(\mu)=\frac{n}{2(n-1)}\cdot \left(\frac{\sigma_2}{\sigma_1}(\lambda)\right)^2
\end{eqnarray*}
for 
\begin{eqnarray*}
\mu=\lambda-\frac{\frac{\sigma_2}{\sigma_1}(\lambda)}{n-1}\cdot(1,\cdots,1)
\end{eqnarray*}
while $\mu \in \Gamma_2$. 
\end{lemm}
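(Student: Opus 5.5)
The plan is a direct computation, using how $\sigma_1$ and $\sigma_2$ change under a shift along the diagonal $(1,\dots,1)$. Write $q=\frac{\sigma_2}{\sigma_1}(\lambda)$, which is well defined and positive because $\lambda\in\Gamma_2$. Set $t=\frac{q}{n-1}$, so that $\mu=\lambda-t(1,\dots,1)$.

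\emph{Step 1: shift formulas.} For any $\lambda\in\rr^n$ and $t\in\rr$, expanding the products gives
\begin{eqnarray*}
\sigma_1(\lambda-t\mathbf 1)=\sigma_1(\lambda)-nt,\qquad \sigma_2(\lambda-t\mathbf 1)=\sigma_2(\lambda)-(n-1)t\,\sigma_1(\lambda)+\binom{n}{2}t^2 .
\end{eqnarray*}
The coefficient $n-1$ comes from the fact that each $\lambda_i$ appears in exactly $n-1$ of the pairs $\lambda_i\lambda_j$.

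\emph{Step 2: the identity.} Substitute $\sigma_2(\lambda)=q\,\sigma_1(\lambda)$ and $t=q/(n-1)$ into the second formula. The middle term becomes $-q\,\sigma_1(\lambda)$, which exactly cancels $\sigma_2(\lambda)$; this cancellation is the reason for choosing this particular $t$. What remains is
\begin{eqnarray*}
\frac{n(n-1)}{2}\cdot\frac{q^2}{(n-1)^2}=\frac{n}{2(n-1)}q^2,
\end{eqnarray*}
which is the claimed equality. In particular $\sigma_2(\mu)>0$.

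\emph{Step 3: $\mu\in\Gamma_2$.} By the definition of $\Gamma_2$ as a set of inequalities, it remains to check $\sigma_1(\mu)>0$. By Step 1, $\sigma_1(\mu)=\sigma_1(\lambda)-\frac{n}{n-1}q$. Multiplying by $\sigma_1(\lambda)>0$, the inequality $\sigma_1(\mu)>0$ is equivalent to
\begin{eqnarray*}
(n-1)\sigma_1(\lambda)^2>n\,\sigma_2(\lambda).
\end{eqnarray*}
To prove this, use $\sigma_1^2=|\lambda|^2+2\sigma_2$ together with Cauchy--Schwarz, $|\lambda|^2\ge \sigma_1^2/n$. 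These give Newton's inequality $\sigma_1^2\ge \frac{2n}{n-1}\sigma_2$. Since $\sigma_2(\lambda)>0$, we get $\frac{2n}{n-1}\sigma_2>\frac{n}{n-1}\sigma_2$, so the inequality is strict and $\sigma_1(\mu)>0$.

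The only step needing care is Step 3, where strictness must be checked. Here it comes for free from the factor of $2$ of slack in Newton's inequality combined with $\sigma_2(\lambda)>0$. Everything else is routine algebra.
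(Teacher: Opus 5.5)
Your proof is correct and follows essentially the same route as the paper. Both arguments expand $\sigma_1,\sigma_2$ of the shifted vector, use the cancellation $\sigma_2(\lambda)-q\,\sigma_1(\lambda)=0$, and invoke the Newton--Maclaurin inequality $\sigma_2\le\frac{n-1}{2n}\sigma_1^2$ to get $\sigma_1(\mu)>0$; the paper phrases this last step as $\sigma_1(\mu)\ge\frac12\sigma_1(\lambda)$, and the only real difference is that you derive Newton's inequality from Cauchy--Schwarz rather than citing it.
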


\begin{proof}
First, note that
\begin{align}\label{sigma1mu}
\sigma_1(\mu)=&\ \sum_{i=1}^n \left( \lambda_i-\frac{\frac{\sigma_2}{\sigma_1}(\lambda)}{n-1}\right)\\ \nonumber
=&\ \sum_{i=1}^n \lambda_i - \frac{n}{n-1}\cdot \frac{\sigma_2}{\sigma_1}(\lambda)\\ \nonumber
=&\ \sigma_1(\lambda)- \frac{n}{n-1}\cdot \frac{\sigma_2}{\sigma_1}(\lambda).
\end{align}

For later use, we observe further that
\begin{eqnarray}\label{sigma1mupositive}
\sigma_1(\mu) > 0.
\end{eqnarray}
This is because from Newton-Maclaurin inequality
\begin{eqnarray}\label{nm21}
\sigma_2(\lambda)\leq \frac{n-1}{2n} \sigma_1^2(\lambda)
\end{eqnarray}
and applying \eqref{nm21} in \eqref{sigma1mu} results in
\begin{eqnarray*}
\sigma_1(\mu) \geq \frac{1}{2}\sigma_1(\lambda)>0
\end{eqnarray*}
as required.

As for $\sigma_2(\mu)$, we compute
\begin{align}\label{sigma2muexpress}
\sigma_2(\mu)=&\ \sum_{1\leq i <j \leq n} \left( \lambda_i-\frac{\frac{\sigma_2}{\sigma_1}(\lambda)}{n-1}\right)\left( \lambda_j-\frac{\frac{\sigma_2}{\sigma_1}(\lambda)}{n-1}\right)
\\ \nonumber
=&\ \sum_{1\leq i <j \leq n} \lambda_i \lambda_j - \frac{\frac{\sigma_2}{\sigma_1}(\lambda)}{n-1} \cdot \sum_{1\leq i <j \leq n} (\lambda_i + \lambda_j) + \sum_{1\leq i <j \leq n}\left(\frac{\frac{\sigma_2}{\sigma_1}(\lambda)}{n-1}\right)^2 
\\ \nonumber
=&\ \sigma_2(\lambda) - \frac{\frac{\sigma_2}{\sigma_1}(\lambda)}{n-1} \cdot (n-1)\sigma_1(\lambda)+\frac{n(n-1)}{2}\cdot \left(\frac{\frac{\sigma_2}{\sigma_1}(\lambda)}{n-1}\right)^2\\\nonumber
=&\ \frac{n}{2(n-1)}\cdot \left(\frac{\sigma_2}{\sigma_1}(\lambda)\right)^2
\end{align}
as required. Coupling \eqref{sigma1mupositive} and \eqref{sigma2muexpress} shows that $\mu \in \Gamma_2$.
\end{proof}

\section{Liouville's theorem}

\begin{theo}\label{Liouville21}
Let $u\in C^\infty(\rr^n)$ be an admissible and semi-convex  solution of the equation
\begin{eqnarray}\label{entirequotient}
\frac{\sigma_2}{\sigma_1}(D^2u)=1,\quad \textit{in }\rr^n.
\end{eqnarray}
Then, $u$ is a quadratic polynomial.
\end{theo}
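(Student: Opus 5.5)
Since $\frac{\sigma_2}{\sigma_1}(D^2u)\equiv 1$, Lemma \ref{quotienttohessian} suggests the substitution
\begin{eqnarray*}
v(x)=u(x)-\frac{|x|^2}{2(n-1)}.
\end{eqnarray*}
Then $D^2v=D^2u-\frac{1}{n-1}I$. The eigenvalues of $D^2v$ are exactly $\mu=\lambda-\frac{1}{n-1}(1,\cdots,1)$ with $\lambda=\lambda(D^2u)\in\Gamma_2$. Applying Lemma \ref{quotienttohessian} pointwise gives
\begin{eqnarray*}
\sigma_2(D^2v)=\frac{n}{2(n-1)}\quad\textit{in }\rr^n,\qquad \lambda(D^2v)\in\Gamma_2 .
\end{eqnarray*}
So $v$ is a smooth admissible entire solution of a $\sigma_2$ equation with a positive constant right-hand side. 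Rescaling $v$ by a positive constant multiple normalizes the right-hand side to $1$.

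Next we transfer semi-convexity. If $D^2u>-KI$, then $D^2v>-(K+\frac{1}{n-1})I$, so $v$ is semi-convex as well. We now invoke the Liouville theorem of Shankar and Yuan \cite{SY22}: entire semi-convex smooth solutions of $\sigma_2(D^2v)=1$ in $\rr^n$ are quadratic polynomials. Hence $v$ is a quadratic polynomial, and therefore so is $u=v+\frac{|x|^2}{2(n-1)}$.

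The main point to check is that the hypotheses match \cite{SY22} exactly. Their theorem is stated for semi-convex entire solutions in the $\Gamma_2$ cone. Lemma \ref{quotienttohessian} supplies $\mu\in\Gamma_2$, which is the admissibility needed there. Warren's example \cite{W16} shows semi-convexity cannot be dropped, so it must genuinely carry over; it does, since the shift changes the lower Hessian bound only by a constant. There is no smoothness issue, because $v$ inherits $C^\infty$ from $u$.
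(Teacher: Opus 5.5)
Your proposal is correct and follows essentially the same route as the paper's proof. Both use the shift $v=u-\frac{|x|^2}{2(n-1)}$, apply Lemma \ref{quotienttohessian} pointwise to get an admissible semi-convex solution of $\sigma_2(D^2v)=\frac{n}{2(n-1)}$, and invoke \cite{SY22}; your normalization of the right-hand side by rescaling is a harmless detail that the paper leaves implicit.
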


\begin{proof}
We introduce the function
\begin{eqnarray}\label{vdef}
v=u-\frac{1}{2(n-1)}|x|^2.
\end{eqnarray}

Then its Hessian $D^2v$ satisfies
\begin{eqnarray}\label{hessianrelation}
D^2v=D^2u-\frac{1}{n-1}I.
\end{eqnarray}

It follows from Lemma \ref{quotienttohessian}, applied to
\begin{eqnarray*}
\mu=\lambda(D^2v)
\end{eqnarray*} 
that
\begin{eqnarray*}
\sigma_2(\mu)=\frac{n}{2(n-1)}.
\end{eqnarray*}

Thus $v \in C^\infty(\rr^n)$ is, by \eqref{hessianrelation}, a semi-convex and admissible, by Lemma \ref{quotienttohessian}, solution of
\begin{eqnarray}\label{finalrewrition}
\sigma_2(D^2v)=\frac{n}{2(n-1)}.
\end{eqnarray}

By result of Shankar and Yuan \cite{SY22}, $v$ is a quadratic polynomial. Consequently, $u$ is a quadratic polynomial.
\end{proof}

\begin{rema} Warren’s rare saddle entire solution \cite{W16} verifies that also for \eqref{entirequotient} the semi-convexity assumption is necessary.
\end{rema}

As a corollary, we have the following result for $\sigma_{n-1}/\sigma_{n-2}$ equation.

\begin{theo}\label{Liouvillen-2n-1}
Let $u
\in C^\infty(\rr^n)$ be a strictly convex solution of the equation
\begin{eqnarray}\label{dualquotient21}
\frac{\sigma_{n-1}}{\sigma_{n-2}}(D^2u)=1,\quad \textit{in }\rr^n.
\end{eqnarray}
Then $u$ must be a quadratic polynomial.
\end{theo}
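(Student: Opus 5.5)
The plan is to reduce Theorem \ref{Liouvillen-2n-1} to Theorem \ref{Liouville21} through the Legendre transform. This is the same duality that relates $\sigma_n/\sigma_{n-1}$ to $\Delta$ and $\sigma_n/\sigma_{n-2}$ to $\sigma_2$.

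For $\lambda$ with all $\lambda_i>0$, set $\lambda_i^* = 1/\lambda_i$. Then $\sigma_k(\lambda^*) = \sigma_{n-k}(\lambda)/\sigma_n(\lambda)$, and therefore
\begin{eqnarray*}
\frac{\sigma_2}{\sigma_1}(\lambda^*) = \frac{\sigma_{n-2}(\lambda)/\sigma_n(\lambda)}{\sigma_{n-1}(\lambda)/\sigma_n(\lambda)} = \left(\frac{\sigma_{n-1}}{\sigma_{n-2}}(\lambda)\right)^{-1}.
\end{eqnarray*}
So if $D^2u>0$ solves \eqref{dualquotient21}, then the inverse Hessian $(D^2u)^{-1}$ satisfies $\frac{\sigma_2}{\sigma_1}=1$. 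It is positive definite, so it is admissible and convex.

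Let $u^*$ be the Legendre transform of $u$, with $Du^*(y)=x$ when $y=Du(x)$, so that $D^2u^*(y) = (D^2u(x))^{-1}$. Then $u^*$ solves \eqref{entirequotient} on $\Omega = Du(\rr^n)$, and it is smooth, strictly convex and semi-convex. Once I know $\Omega=\rr^n$, Theorem \ref{Liouville21} says $u^*$ is a quadratic polynomial. Its Hessian is then a constant positive definite matrix, so $D^2u$ is constant and $u$ is quadratic.

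The main obstacle is showing that $Du(\rr^n)=\rr^n$, i.e. that $u$ has superlinear growth in every direction, so that the Legendre transform is entire. My first attempt uses the equation. Strict convexity together with $\sigma_{n-1}/\sigma_{n-2}(\lambda)=1$ gives a lower bound on the eigenvalues. By Newton--Maclaurin,
\begin{eqnarray*}
\frac{\sigma_{n-1}}{\sigma_{n-2}}(\lambda)\le \frac{2}{n-1}\cdot\frac{\sigma_1}{\ldots}\ ?
\end{eqnarray*}
This is not directly useful. A cleaner bound comes from the dual side. Let $\mu=\lambda^*$, so $\frac{\sigma_2}{\sigma_1}(\mu)=1$ with $\mu>0$. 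Then $\sigma_2(\mu)\ge \mu_{\max}\cdot(\sigma_1(\mu)-\mu_{\max})$, and also $\sigma_2(\mu)=\sigma_1(\mu)$. It is not immediate that this bounds $\mu_{\max}$ from above; for example $\mu=(t,\epsilon,\dots)$ gives $\sigma_2\approx t\epsilon(n-1)$ and $\sigma_1\approx t$, so $\epsilon\approx 1/(n-1)$ while $t$ stays free. I do, however, expect a lower bound on the second largest $\mu$, and a sum-type lower bound: $\sigma_1(\mu)=\sigma_2(\mu)\le \frac{n-1}{2n}\sigma_1(\mu)^2$, which gives $\sigma_1(\mu)\ge 2n/(n-1)$.

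Translated back, this says $\sum_i 1/\lambda_i \ge c>0$. That is weaker than a uniform lower bound on $D^2u$, so superlinear growth cannot be read off pointwise. I would try to prove it by restricting $u$ to lines and using strict convexity. Alternatively, I would work with the partial Legendre transform, or with the Lewy rotation used by Shankar--Yuan, which sends $u$ to a function defined on all of $\rr^n$ without any growth hypothesis; the rotated equation would again be of $\sigma_2$ type after the shift in Lemma \ref{quotienttohessian}.

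If neither approach works, the fallback is to show that $\Omega$ is convex and that $u^*$ blows up in gradient at $\partial\Omega$. That would contradict the interior estimates for $\sigma_2$ of Shankar--Yuan applied to $v^*=u^*-\frac{1}{2(n-1)}|y|^2$, and hence force $\Omega=\rr^n$.
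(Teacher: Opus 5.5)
Your reduction is the same as the paper's. You pass to the Legendre transform, use $\frac{\sigma_2}{\sigma_1}(\lambda^{-1})=\frac{\sigma_{n-2}}{\sigma_{n-1}}(\lambda)$, apply Theorem \ref{Liouville21} to $u^*$, and invert the constant Hessian. All of that is correct.

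The paper's written proof states that the Legendre transform solves the $\sigma_2/\sigma_1$ equation ``in $\rr^n$'', pointing to \cite{SY20}. It gives no separate argument that $Du(\rr^n)=\rr^n$. You are right that this is the one non-algebraic step. Theorem \ref{Liouville21} concerns entire solutions, and $D^2u>0$ alone does not make $Du$ onto (e.g.\ $\sqrt{1+|x|^2}$). Surjectivity is immediate under uniform convexity $D^2u\ge cI$. For $n=3$ the statement is literally Theorem \ref{Liouville21} applied to $u$ itself. So the step needs an argument only for $n\ge 4$ with merely pointwise convexity.

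As a proof, however, your proposal stops exactly at this step, and none of the repairs you list works as stated.

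\emph{Pointwise bounds.} Let $\mu_1\ge\mu_2\ge\cdots$ be the reciprocals $1/\lambda_i$. From $\mu_1\mu_2\le\sigma_2(\mu)=\sigma_1(\mu)\le n\mu_1$ one gets that the second smallest eigenvalue of $D^2u$ is at least $1/n$, and you have $\sum_i 1/\lambda_i\ge 2n/(n-1)$. But $\sqrt{1+x_1^2}+\frac{1}{2n}|x'|^2$ satisfies both bounds for $n\ge 3$ and has $Du(\rr^n)=(-1,1)\times\rr^{n-1}$. Restricting to lines and using strict convexity has the same defect.

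\emph{Lewy rotation.} A rotation by $\theta\in(0,\pi/2)$ does produce an entire function, but its eigenvalues are $\bar\lambda_i=\tan(\arctan\lambda_i-\theta)$. After clearing denominators, $\sigma_{n-1}(\lambda)=\sigma_{n-2}(\lambda)$ becomes a symmetric polynomial equation in $\bar\lambda$ whose top-degree part is $-(nt+\binom{n}{2}t^2)\,\sigma_n(\bar\lambda)$, with $t=\tan\theta>0$. Translating $\bar\lambda$ does not change this top-degree part, so the shift of Lemma \ref{quotienttohessian} cannot turn it into a $\sigma_2$ equation when $n\ge3$. Only $\theta=\pi/2$, which is the Legendre transform itself, leads to $\sigma_2/\sigma_1$. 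The partial Legendre transform has the same domain problem as the full one.

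\emph{The fallback.} It is true that $\Omega$ is convex and that $|Du^*|\to\infty$ at $\partial\Omega$, because $Du^*=(Du)^{-1}:\Omega\to\rr^n$ is a homeomorphism. But the Shankar--Yuan estimates bound $D^2v^*$ at interior points in terms of norms of $v^*$ on balls contained in $\Omega$. They say nothing at $\partial\Omega$ and are not contradicted by gradient blow-up there.

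What is missing is a genuinely global argument that $u^*$ is entire, or an additional hypothesis such as $D^2u\ge cI$.
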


\begin{proof}
Let $w$ be the Legendre transform of $u$, cf. \cite{SY20} for details. Then 
\begin{eqnarray*}
D^2w(y)=(D^2u(x))^{-1}
\end{eqnarray*}
in the coordinates $y(x)=Du(x)$. Moreover $w$ is a convex solution of 
\begin{eqnarray*}
\frac{\sigma_2}{\sigma_1}(D^2w)=\frac{\sigma_2}{\sigma_1}\left((D^2u)^{-1}\right)=\frac{\sigma_{n-2}}{\sigma_{n-1}}(D^2u)=1,\quad \textit{in }\rr^n,
\end{eqnarray*}
by \eqref{dualquotient21}.

By Theorem \ref{Liouville21} above, $w$ is a quadratic polynomial. It follows that $u$ is a quadratic polynomial.

\end{proof}

\section{Interior estimate}

\begin{theo}\label{interiorestthm}
Let $u\in C^\infty(B_1)$ be an admissible solution of the equation
\begin{eqnarray*}
\frac{\sigma_2}{\sigma_1}(D^2u)=1,\quad \textit{in }B_1\subset\rr^n.
\end{eqnarray*}

Assume either $n=3,4$ or $n\geq 5$ with
\begin{align*}
\lambda_{min}(D^2u)-\frac{1}{n-1}\geq -c(n)\left(\Delta u-\frac{n}{n-1}\right) \: \text{for} \:\: c(n)=\frac{\sqrt{3n^2+1}-n+1}{2n}.
\end{align*}

Then, the following interior estimate holds 
\begin{eqnarray}\label{interiorest}
|D^2u(0)|\leq C,
\end{eqnarray}
where $C$ is a positive constant depending on $n$ and $\|u\|_{C^{0,1}(B_1)}$.
\end{theo}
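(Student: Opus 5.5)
The plan is to reduce to the interior Hessian estimates of Shankar and Yuan for the $\sigma_2$ equation, in the same way as in the proof of Theorem \ref{Liouville21}. Set
\begin{eqnarray*}
v=u-\frac{1}{2(n-1)}|x|^2 \quad \text{in } B_1,
\end{eqnarray*}
so that $D^2v=D^2u-\frac{1}{n-1}I$. By Lemma \ref{quotienttohessian}, applied pointwise to $\lambda=\lambda(D^2u)\in\Gamma_2$, the function $v$ is an admissible solution of
\begin{eqnarray*}
\sigma_2(D^2v)=\frac{n}{2(n-1)} \quad \text{in } B_1,
\end{eqnarray*}
with $\lambda(D^2v)\in\Gamma_2$. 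The bound on $v$ is immediate:
\begin{eqnarray*}
\|v\|_{C^{0,1}(B_1)}\leq \|u\|_{C^{0,1}(B_1)}+C(n).
\end{eqnarray*}
Since $|D^2u(0)|\leq |D^2v(0)|+\frac{1}{n-1}$, it suffices to bound $|D^2v(0)|$ in terms of $n$ and $\|v\|_{C^{0,1}(B_1)}$.

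The dimensional cases are then handled by the corresponding $\sigma_2$ results.

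\textbf{Case $n=3,4$.} Here I would invoke the interior Hessian estimate of Shankar and Yuan for the $\sigma_2=1$ equation in dimension four (and the earlier Warren--Yuan/Qiu-type estimate in dimension three). A rescaling $v\mapsto c\,v$ normalizes the right-hand side constant $\frac{n}{2(n-1)}$ to $1$. These estimates bound $|D^2v(0)|$ by a constant depending only on $n$ and $\|Dv\|_{L^\infty(B_1)}$, hence on $\|v\|_{C^{0,1}(B_1)}$.

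\textbf{Case $n\geq5$.} I would use the Shankar--Yuan estimate in general dimensions, valid for solutions satisfying the dynamic semi-convexity condition
\begin{eqnarray*}
\lambda_{min}(D^2v)\geq -c(n)\,\Delta v, \qquad c(n)=\frac{\sqrt{3n^2+1}-n+1}{2n}.
\end{eqnarray*}
The key check is that the hypothesis of Theorem \ref{interiorestthm} is exactly this condition written for $u$. Indeed,
\begin{eqnarray*}
\lambda_{min}(D^2v)=\lambda_{min}(D^2u)-\frac{1}{n-1}, \qquad \Delta v=\Delta u-\frac{n}{n-1}.
\end{eqnarray*}
It remains to confirm that the constant $c(n)$ matches the one in \cite{SY25}. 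The rescaling $v\mapsto cv$ preserves this homogeneous condition, so the normalization of the right-hand side does not interfere.

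The main obstacle is bookkeeping rather than analysis. I would need to make sure the cited Shankar--Yuan estimates are stated for $C^{0,1}$ (gradient) dependence and the precise semi-convexity constant. If an estimate is only stated with $\mathrm{osc}\, v$ or $\|v\|_{L^\infty}$ dependence on a larger ball, I would convert it by a standard covering and scaling argument on $B_{1/2}$. Finally, I would note that admissibility of $v$ in $\Gamma_2$ was guaranteed by Lemma \ref{quotienttohessian}.
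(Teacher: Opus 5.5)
Your proposal is correct and follows the paper's proof. You use the same shift $v=u-\frac{1}{2(n-1)}|x|^2$, which turns the equation into the admissible equation $\sigma_2(D^2v)=\frac{n}{2(n-1)}$. You then invoke the same results: Warren--Yuan \cite{WY} for $n=3$, and Shankar--Yuan \cite{SY25} for $n=4$ and for $n\geq 5$, where the stated hypothesis on $u$ is exactly the dynamic semi-convexity condition for $v$. Your remarks on rescaling the right-hand side to $1$ and on $\|v\|_{C^{0,1}(B_1)}$ are routine bookkeeping that the paper leaves implicit.
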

\begin{proof}
We argue as in the proof of Theorem \ref{Liouville21} by defining $v$ by \eqref{vdef}. Then $v$ satisfies equation \eqref{finalrewrition}. We can then use Warren and Yuan's result \cite{WY} for $n=3$, and Shankar and Yuan's result \cite{SY25} for $n\geq 4$ to conclude the desired interior $C^2$ estimate \eqref{interiorest}.
\end{proof}


\begin{thebibliography}{99}
\bibitem{BCBJ03} 
J. Bao, J. Chen, B. Guan and M. Ji, 
\textit{Liouville property and regularity of a Hessian quotient equation}, 
Amer. J. Math. 125 (2003), no. 2, 301–316.

\bibitem{CY}
S.-Y. A. Chang and Y. Yuan,
\textit{A Liouville problem for the $sigma$-$2$ equation}, 
Discrete Contin. Dyn. Syst. 28 (2010), no. 2, 659–664.

\bibitem{CWY}
J. Chen, M. Warren and Y. Yuan, 
{\em A priori estimate for convex solutions to special Lagrangian equations and its application}, 
Comm. Pure Appl. Math. 62 (2009), no. 4, 583–595.

\bibitem{GS25} 
P. Guan and M. Sroka, 
\textit{A special concavity property for positive Hessian quotient operators}, 
Discrete Contin. Dyn. Syst., doi: 10.3934/dcds.2025181

\bibitem{H}
E. Heinz, 
{\em  On elliptic Monge-Amp\`ere equations and Weyl's embedding problem}, 
J. Analyse Math. 7 (1959), 1–52.


\bibitem{L25} 
S. Lu,  
\textit{Interior $C^2$ estimate for Hessian quotient equation in general dimension}, 
Ann. PDE 11 (2025), no. 2, Paper No. 17, 26 pp.

\bibitem{LT25} 
S. Lu and Y.-L. Tsai, 
\textit{Pogorelov type interior $C^2$ estimate for Hessian quotient equation and its application}, 
J. Reine Angew. Math. 831 (2026), 155-184.

\bibitem{LT26} 
S. Lu and Y.-L. Tsai, 
\textit{A note on interior $C^2$ estimate for general Hessian quotient equation}, 
preprint.

\bibitem{SY20} 
R. Shankar and Y. Yuan, 
\textit{Hessian estimate for semiconvex solutions to the $\sigma_2$ equation}, 
Calc. Var. Partial Differential Equations 59 (2020), no. 1, Paper No. 30, 12 pp.

\bibitem{SY22} 
R. Shankar and Y. Yuan, 
\textit{Rigidity for general semiconvex entire solutions to the $\sigma_2$ equation}, 
Duke Math. J. 171 (2022), no. 15, 3201–3214.

\bibitem{SY25} 
R. Shankar and Y. Yuan, 
\textit{Hessian estimates for the sigma-2 equation in dimension four}, 
Ann. of Math. (2) 201 (2025), no. 2, 489–513.

\bibitem{S25} 
M. Sroka, 
\textit{Remarks on Hessian quotient equations on Riemannian manifolds}, 
J. Funct. Anal. 289 (2025), no. 10, Paper No. 111123, 21 pp.

\bibitem{WdY14}
D. Wang and Y. Yuan, 
{\em Hessian estimates for special Lagrangian equations with critical and supercritical phases in general dimensions}, 
Amer. J. Math. 136 (2014), no. 2, 481–499. 

\bibitem{W16} 
M. Warren, 
\textit{Nonpolynomial entire solutions to $\sigma_k$ equations}, 
Comm. Partial Differential Equations 41 (2016), no. 5, 848–853.

\bibitem{WY}
M. Warren and Y. Yuan, 
\textit{Hessian estimates for the $sigma$-$2$ equation in dimension $3$}, 
Comm. Pure Appl. Math. 62 (2009), no. 3, 305–321.
 
\bibitem{Y}
Y. Yuan, 
\textit{A Bernstein problem for special Lagrangian equations}, 
Invent. Math. 150 (2002), no. 1, 117–125.


\end{thebibliography}
\end{document}